\newtheorem{theorem}{Theorem}
\numberwithin{equation}{section}
\begin{document}

\title[]{On some integral transforms of Coulomb functions related to three-dimensional proper Lorentz group}

\author[{\bf  I. A. Shilin}]{\bf I. A. Shilin}

\address{I. A. Shilin:  Department of Higher Mathematics, National Research University MPEI,
Krasnokazarmennaya 14, Moscow 111250, Russia}
 \email{shilinia@mpei.ru}

 \address{I. A. Shilin: Department of Algebra, Moscow State Pedagogical University,
Malaya Pirogovskaya 1, Moscow 119991, Russia}
 \email{ilyashilin@li.ru}

\bigskip

\keywords{Coulomb functions, proper Lorenz group, matrix elements
of representation, Fourier transform, Mellin transform,
Mehler-Fock transform}

\subjclass[2010]{Primary 33C10, 33C80; Secondary 33B15, 33C05}

\begin{abstract} Considering the relationship between two bases in
representation space of the three-dimensional proper Lorentz
group, we derive some formulas with integrals involving Coulomb
wave functions, which can be considered as Fourier, Mellin,
$K$-Bessel, Hankel and Mehler-Fock transforms of these functions.
\end{abstract}

\thanks{$^*$ Corresponding author}

\maketitle

\section{Introduction}

As usually, let $\mathbb{R}$ and $\mathbb{C}$ be the sets of real
and complex numbers, respectively. In addition, throughout this
paper we use the definition $\mathbb{R}_a$ for the ray
$(a;+\infty)$.

Let us recall that the Coulomb (wave) functions
$F_\sigma(\rho,\lambda)$ and $H^\pm_\sigma(\rho,\lambda)$ are
functions belonging to the kernel of the Coulomb differential
operator
$$\mathfrak{d}:=\frac{{\rm d}^2}{{\rm
d}\lambda^2}+1-\frac{2\rho}\lambda-\frac{\sigma(\sigma+1)}{\lambda^2},$$
where $\lambda\in\mathbb{R}^0$, $\rho\in\mathbb{R}$ (Sommerfeld
parameter), and $\sigma$ is non-negative integer (angular momentum
quantum number). These functions are defined by formulas \cite{ek}
\begin{gather}F_\sigma(\rho,\lambda)=2^{-\sigma-1}C_\sigma(\rho)(\mp{\bf
i})^{\sigma+1}\,M_{\pm{\bf i}\rho,\sigma+\frac12}(\pm2{\bf
i}\lambda),\label{mitterand}\\H^\pm_\sigma(\rho,\lambda)=(\mp{\bf
i})^\sigma\exp\left(\frac{\pi\rho}2\pm{\bf
i}c_\sigma(\rho)\right)\,W_{\mp{\bf
i}\rho,\sigma+\frac12}(\mp2{\bf
i}\lambda),\label{thatcher}\end{gather} where $M_{\mu,\nu}(z)$ and
$W_{\mu,\nu}(z)$ are Whittaker functions of the first and second
kind, respectively, and the normalizing constant (Gamow factor)
$C_\sigma(\rho)$ and Coulomb phase shift $c_\sigma(\rho)$ are
defined as follow:
\begin{gather*}C_\sigma(\rho)=2^\sigma\exp\left(-\frac{\pi\rho}2\right)\left[\Gamma(2\sigma+2)\right]^{-1}
\left|\Gamma(\sigma+1+{\bf
i}\rho)\right|,\\c_\sigma(\rho)=\arg\Gamma(\sigma+1+{\bf
i}\rho).\end{gather*} They can be considered for complex values of
$\lambda(\ne0)$, $\rho$ and $\sigma$ \cite{dyf,g}. The definitions
\eqref{mitterand} and \eqref{thatcher} are correct since the
choice of upper or lower signs in \eqref{mitterand} and
\eqref{thatcher} isn't important in view of identity
${}_1F_1(a;b;z)=\exp(z)\,{}_1F_1(b-a;b;-z)$ (known as Kummer's
transformation) for confluent hypergeometric function ${}_1F_1$
which determines the both Whittaker functions.

We recall also that $F_\sigma(\rho,\lambda)$ and
$H^\pm_\sigma(\rho,\lambda)$ are regular and irregular
(respectively) solutions of the Coulomb wave equation
$\mathfrak{d}[y]=0$ and connected by the identity
\begin{equation}\label{de-gaulle}F_\sigma(\rho,\lambda)=\pm\,\,{\rm imaginary\,\, part\,\,
of\,\, }\,H^\pm_\sigma(\rho,\lambda).\end{equation} Below we also
use another function belonging to ${\rm Ker}\,\mathfrak{d}$:
$$G_\sigma(\rho,\lambda)=\,{\rm real\,\, part\,\, of\,\,
}\,H^\pm_\sigma(\rho,\lambda).$$ Since defect of $\mathfrak{d}$ is
equal to 2, the linearly independent functions
$F_\sigma(\rho,\lambda)$ and $G_\sigma(\rho,\lambda)$ form a basis
in ${\rm Ker}\,\mathfrak{d}$. Another basis consists of
$H^+_\sigma(\rho,\lambda)$ and $H^-_\sigma(\rho,\lambda)$. In
\cite{c} the author considered also two other bases in ${\rm
Ker}\,\mathfrak{d}$ consisting of functions, also named Coulomb
functions and introduced by Hartree in \cite{h} and H. and B.
Jeffreys in \cite{jj}.

\section{Representation space, its bases, and functionals ${\sf F}_1$ and ${\sf F}_2$}

We recall that the three-dimensional Lorentz group is the subgroup
of matrices $(g_{ij})$ in $GL(3,\mathbb{R})$ satisfying the
equalities
$g_{i1}^2-g_{i2}^2-g_{i3}^2=(-1)^{E\left(\frac{i}2\right)}$ for
$i\in\{1,2,3\}$, where $E(n)$ denotes the entire part of an
integer $n$. In this paper we consider its intersection $G$ with
$SL(3,\mathbb{R})$, calling $G$ the proper Lorentz group.

Let $\sigma\in\mathbb{C}$ and $T$ be the representation of $G$ in
the linear space $\mathfrak{D}$ consisting of $\sigma$-homogeneous
and infinitely differentiable functions defined on the cone
$\Lambda:\,x_1^2-x_2^2-x_3^2=0$ acting according to rule
$T(g)[f(x)]=f(g^{-1}x)$. We recall that the functions $x^\mu_\pm$
on $\mathbb{R}$, which generate the generalized functions
$(x^\mu_\pm,f)$ \cite{gsh}, are defined as follow: $x^\mu_\pm$ is
equal to $|x|^\mu$ for $x\in\mathbb{R}^\pm$ and coincides with
zero function otherwise.
 In this paper we deal with the bases \cite{vsh}
  $$B_1=\left\{f_\lambda(x)=(x_1+x_2)^\sigma\,\exp\frac{\lambda x_3}{x_1+x_2}\,\mid\,\lambda\in\mathbb{R}\right\}$$ and
   $$B_2=\left\{f_{\rho,\pm}(x)=(x_2)_\pm^{\sigma-{\bf i}\rho}\,(x_1+x_3)^{{\bf i}\rho}\,\mid\,\rho\in\mathbb{R}\right\}.$$

Below we use two bilinear functionals defined on pairs of
representation spaces in the same way as in \cite{jnsa}. In order
to introduce them, we define the following subsets on $\Lambda$:
parabola $\gamma_1:\,x_1+x_2=1$ and hyperbola
$\gamma_2=\gamma_{2,+}\cup\gamma_{2,-}$, where
$\gamma_{2,\pm}:\,x_2=\pm1$. Let $H_i$ be a subgroup of $G$, which
acts transitively on $\gamma_i$. We define ${\sf F}_1$ and ${\sf
F}_2$ as
$${\sf
F}_i:\,\,(\mathfrak{D},\hat{\mathfrak{D}})\longrightarrow\mathbb{C},\,\,(f,g)\longmapsto\int_{\gamma_i}f(x)\,g(x)\,{\rm
d}\gamma_i,$$ where ${\rm d}\gamma_i$ is a $H_i$-invariant measure
on $\gamma_i$. Let us parameterize $\gamma_1$ and $\gamma_2$ as
follow:
$$\gamma_1:\,\begin{cases}x_1=\frac12\left(1+\alpha_1^2\right),\\x_2=\frac12\left(1-\alpha_1^2\right),\\x_3=\alpha_1,\end{cases}\,\,\,\,\,\,\,
\gamma_{2,\pm}=\begin{cases}x_1=\cosh\alpha_2,\\x_2=\pm1,\\x_3=\sinh\alpha_2,\end{cases}$$
where $\alpha_1,\alpha_2\in\mathbb{R}$. Since the subgroups $H_1$
and $H_2$ consist of matrices
$$h_1(\theta_1)=\frac12\left(\begin{array}{rrr}2+\theta_1^2&\theta_1^2&2\theta_1\\
-\theta_1^2&2-\theta_1^2&-2\theta_1\\2\theta_1&2\theta_1&2\end{array}\right)$$
and $$h_2(\theta_2)=\begin{pmatrix}\cosh\theta_2&0&\sinh\theta_2\\0&1&0\\
\sinh\theta_2&0&\cosh\theta_2\end{pmatrix},$$ respectively, where
$\theta_1\in[0;2\pi)$ and $\theta_2\in\mathbb{R}$, and
\begin{gather*}T(h_1(\theta_1))[f_\lambda(\alpha_1)]=f_\lambda(\alpha_1-\theta_1),\\
T(h_2(\theta_2))[f_{\rho,\pm}(\alpha_2)]=f_{\rho,\pm}(\alpha_2-\theta_2),\end{gather*}
we have ${\rm d}\gamma_i={\rm d}\alpha_i$. It have been showed in
\cite{jnsa} that ${\sf F}_1$ and ${\sf F}_2$ coincide on pairs
$(\mathfrak{D},\mathfrak{D}^\bullet)$ such that degree of
homogeneity of $\mathfrak{D}^\bullet$ is equal to $-\sigma-1$.

\section{Matrix elements of $B_1\rightleftarrows B_2$ and $B^\bullet_1\rightleftarrows B^\bullet_2$ transformations in terms of Coulomb functions}

Let us express a function $f_\lambda\in B_1^\bullet$ as a linear
combination of vectors belonging to $B_2^\bullet$:
\begin{equation}\label{chirac}f^\bullet_\lambda(x)=\int_\mathbb{R}[c^\bullet_{\lambda,\rho,+}f^\bullet_{\rho,+}(x)+c^\bullet_{\lambda,\rho,-}f^\bullet_{\rho,-}(x)]\,{\rm
d}\rho.\end{equation} Since
\begin{equation}\label{roosevelt}f_{\rho,\pm}|_{\gamma_{2,\pm}}=f^\bullet_{\rho,\pm}|_{\gamma_{2,\pm}}=\exp({\bf
i}\rho\alpha_2)\,\,\,\,\,\,\,\,{\rm
and}\,\,\,\,\,\,\,\,f_{\rho,\pm}|_{\gamma_{2,\mp}}=f^\bullet_{\rho,\pm}|_{\gamma_{2,\mp}}=0,\end{equation}
we have
\begin{multline*}{\sf
F}_i(f^\bullet_\lambda,f_{\hat\rho,\pm})=\int_\mathbb{R}c^\bullet_{\lambda,\rho,\pm}\,{\sf
F}_2(f^\bullet_{\rho,\pm},f_{\hat\rho,\pm})\,{\rm
d}\rho\\=\int_\mathbb{R}c^\bullet_{\lambda,\rho,\pm}\,{\rm
d}\rho\,\int_\mathbb{R}\exp({\bf i}(\rho+\hat\rho)\alpha_2)\,{\rm
d}\alpha_2=2\pi\int_\mathbb{R}c^\bullet_{\lambda,\rho,\pm}\,\delta(\rho+\hat\rho)\,{\rm
d}\rho=2\pi\,c^\bullet_{\lambda,-\hat\rho,\pm},\end{multline*}
where $\delta(\rho+\hat\rho)$ is the $\hat\rho$-delayed Dirac
delta function, therefore,
$$c^\bullet_{\lambda,\rho,\pm}=\frac1{2\pi}{\sf
F}_i(f^\bullet_\lambda,f_{-\hat\rho,\pm}).$$ In the same way, if
\begin{equation}\label{obama}f^\bullet_{\rho,\pm}(x)=\int_{\mathbb{R}}c^\bullet_{\rho,\pm,\lambda}\,f^\bullet_\lambda(x)\,{\rm
d}\lambda,\end{equation} then
\begin{equation}\label{chun-doo-hwan}c^\bullet_{\rho,\pm,\lambda}=\frac1{2\pi}{\sf
F}_i(f^\bullet_{\rho,\pm},f_{-\lambda})=c_{-\lambda,-\rho,\pm}.\end{equation}
Considering that $\sigma$ is the third argument (after $\rho$ and
$\lambda$) of $c^\bullet_{\rho,\pm,\lambda}$, we derive from
\eqref{chun-doo-hwan} that
$c^\bullet_{\rho,\pm,\lambda}(\sigma)=c_{-\lambda,-\rho,\pm}(-\sigma-1)$.

\vskip 3mm

\begin{theorem} Let $\sigma\in\mathbb{R}_{-1}$ and $\lambda\ne0$,
\begin{equation*}c^\bullet_{\lambda,\rho,+}=\frac{|\Gamma(\sigma+1+{\bf
i}\rho)|}{\pi\,\lambda^{\sigma+1}}\,
\exp\left(\frac{\pi\rho}2\right)\,F_\sigma(\rho,\lambda).\end{equation*}\end{theorem}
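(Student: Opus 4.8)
The plan is to evaluate the coefficient $c^\bullet_{\lambda,\rho,+}$ directly from the formula derived just before the theorem, namely $c^\bullet_{\lambda,\rho,\pm}=\frac1{2\pi}{\sf F}_i(f^\bullet_\lambda,f_{-\rho,\pm})$, choosing to compute the functional as the integral ${\sf F}_1$ over the parabola $\gamma_1$ (this is legitimate since ${\sf F}_1$ and ${\sf F}_2$ agree on the relevant pair of spaces, the degree of $f^\bullet_\lambda$ being $-\sigma-1$). First I would restrict the two basis vectors to $\gamma_1$ using the parameterization $x_1=\tfrac12(1+\alpha_1^2)$, $x_2=\tfrac12(1-\alpha_1^2)$, $x_3=\alpha_1$: on this curve $x_1+x_2=1$, so $f^\bullet_\lambda|_{\gamma_1}=\exp(\lambda\alpha_1)$, while $f_{-\rho,+}|_{\gamma_1}=(x_2)_+^{\sigma+{\bf i}\rho}(x_1+x_3)^{-{\bf i}\rho}$ becomes $\bigl(\tfrac12(1-\alpha_1^2)\bigr)_+^{\sigma+{\bf i}\rho}\bigl(\tfrac12(1+\alpha_1)^2\bigr)^{-{\bf i}\rho}$. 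The $(\cdot)_+$ truncation forces the $\alpha_1$-integral to run only over $(-1,1)$.

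Next I would carry out the substitution that turns this into a Whittaker-function integral. Writing the measure as ${\rm d}\gamma_1={\rm d}\alpha_1$, the coefficient becomes
\begin{equation*}c^\bullet_{\lambda,\rho,+}=\frac1{2\pi}\,2^{-\sigma-{\bf i}\rho}\int_{-1}^{1}(1-\alpha_1^2)^{\sigma+{\bf i}\rho}\,(1+\alpha_1)^{-2{\bf i}\rho}\,\exp(\lambda\alpha_1)\,{\rm d}\alpha_1.\end{equation*}
A linear change of variable $\alpha_1=2t-1$ (so $1+\alpha_1=2t$, $1-\alpha_1=2(1-t)$, $t\in(0,1)$) converts the integrand into $t^{\sigma-{\bf i}\rho}(1-t)^{\sigma+{\bf i}\rho}\exp(2\lambda t)$ up to powers of $2$ and the factor $e^{-\lambda}$; this is exactly the Euler-type integral representation of the confluent hypergeometric function ${}_1F_1(\sigma+1-{\bf i}\rho;2\sigma+2;2\lambda)$, valid because $\mathrm{Re}(\sigma\pm{\bf i}\rho)>-1$ is guaranteed by the hypothesis $\sigma\in\mathbb{R}_{-1}$. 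I would then rewrite the resulting ${}_1F_1$ as a Whittaker $M$-function via $M_{\mu,\nu}(z)=z^{\nu+1/2}e^{-z/2}{}_1F_1(\nu-\mu+\tfrac12;2\nu+1;z)$, matching $\nu=\sigma+\tfrac12$ and $\mu={\bf i}\rho$ (or $-{\bf i}\rho$, using Kummer's transformation as the authors note), so that the whole expression is proportional to $M_{\pm{\bf i}\rho,\sigma+1/2}(\pm 2{\bf i}\lambda)$.

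Finally I would feed this into the definition \eqref{mitterand} of $F_\sigma(\rho,\lambda)$, solving for $M_{\pm{\bf i}\rho,\sigma+1/2}(\pm 2{\bf i}\lambda)$ in terms of $F_\sigma(\rho,\lambda)$, $C_\sigma(\rho)$ and the powers of $2$ and $\mp{\bf i}$; substituting the explicit value of the Gamow factor $C_\sigma(\rho)=2^\sigma\exp(-\pi\rho/2)[\Gamma(2\sigma+2)]^{-1}|\Gamma(\sigma+1+{\bf i}\rho)|$ and the Beta-function normalization $B(\sigma+1-{\bf i}\rho,\sigma+1+{\bf i}\rho)=\Gamma(\sigma+1-{\bf i}\rho)\Gamma(\sigma+1+{\bf i}\rho)/\Gamma(2\sigma+2)$ that appears when one writes the Euler integral, all the Gamma factors except $|\Gamma(\sigma+1+{\bf i}\rho)|$ should cancel against each other, along with the powers of $2$ and the phase $(\mp{\bf i})^{\sigma+1}$, leaving precisely $\dfrac{|\Gamma(\sigma+1+{\bf i}\rho)|}{\pi\,\lambda^{\sigma+1}}\exp\!\bigl(\tfrac{\pi\rho}2\bigr)F_\sigma(\rho,\lambda)$. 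The main obstacle I anticipate is bookkeeping of the complex phase factors and the branch of $\lambda^{\sigma+1}$: tracking $(\pm 2{\bf i}\lambda)^{\sigma+1/2}$ from the Whittaker representation together with $(\mp{\bf i})^{\sigma+1}$ from \eqref{mitterand} and confirming that they combine to the real factor $\lambda^{\sigma+1}$ (and that the $\exp(\mp z/2)$ piece of the $M$-function correctly cancels the $e^{\pm{\bf i}\lambda}$-type exponentials produced by the change of variables) is the delicate computation; the rest is routine manipulation of Euler's integral and Gamma identities.
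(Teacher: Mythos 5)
Your strategy coincides with the paper's proof of Theorem 1 in every structural respect: you evaluate $c^\bullet_{\lambda,\rho,+}=\frac1{2\pi}{\sf F}_1(f^\bullet_\lambda,f_{-\rho,+})$ on the parabola $\gamma_1$, the $(\cdot)_+$ truncation cuts the integral down to $(-1,1)$, the resulting integral is the Euler (Beta-type) representation of ${}_1F_1(\sigma+1-{\bf i}\rho;2\sigma+2;\cdot)$ (the paper invokes exactly this as \eqref{indira-gandhi}), one passes to $M_{{\bf i}\rho,\sigma+\frac12}$ via $M_{\mu,\nu}(z)=z^{\nu+\frac12}e^{-z/2}\,{}_1F_1(\nu-\mu+\frac12;2\nu+1;z)$, and one finishes with \eqref{mitterand}, ${\rm B}(z,w)=\Gamma(z)\Gamma(w)/\Gamma(z+w)$ and $\Gamma(\overline z)=\overline{\Gamma(z)}$. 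So this is not a different route.

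There is, however, one genuine gap in your write-up: the argument of the confluent hypergeometric function. Taking the restriction $f^\bullet_\lambda|_{\gamma_1}=\exp(\lambda\alpha_1)$ literally, your Euler integral yields ${}_1F_1(\sigma+1-{\bf i}\rho;2\sigma+2;2\lambda)$, i.e.\ the Whittaker function $M_{{\bf i}\rho,\sigma+\frac12}(2\lambda)$ of \emph{real} argument; you then assert that this is proportional to $M_{\pm{\bf i}\rho,\sigma+\frac12}(\pm2{\bf i}\lambda)$, but no identity converts a real-argument $M$-function into an imaginary-argument one, and only the latter appears in the definition \eqref{mitterand} of $F_\sigma(\rho,\lambda)$. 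The paper's computation uses the oscillatory kernel $\exp({\bf i}\lambda\alpha_1)$ on $\gamma_1$ (this is also what the $\delta$-function orthogonality behind \eqref{chun-doo-hwan} and \eqref{obama} requires, in parallel with $\exp({\bf i}\rho\alpha_2)$ in \eqref{roosevelt}), which produces ${}_1F_1(\sigma+1-{\bf i}\rho;2\sigma+2;2{\bf i}\lambda)=(2{\bf i}\lambda)^{-\sigma-1}e^{{\bf i}\lambda}M_{{\bf i}\rho,\sigma+\frac12}(2{\bf i}\lambda)$ and hence does land on $F_\sigma(\rho,\lambda)$. You must adopt that kernel from the outset, or the final identification fails. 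Two smaller points: your displayed prefactor omits the factor $2^{{\bf i}\rho}$ coming from $\bigl(\tfrac12(1+\alpha_1)^2\bigr)^{-{\bf i}\rho}$, and the phase and power bookkeeping you defer (the cancellation of $(2{\bf i}\lambda)^{-\sigma-1}$ against $(\mp{\bf i})^{\sigma+1}$, $2^{-\sigma-1}$ and the Gamow factor $C_\sigma(\rho)$) is precisely the substance of the paper's last step, so it needs to be carried out rather than anticipated.
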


\begin{proof} Let us use the known formula \cite[see, e.g., Entry 2.3.6(1)]{v1}
\begin{equation}\label{indira-gandhi}\int\limits_0^ax^{\alpha-1}\,(a-x)^{\beta-1}\,\exp(-px)\,{\rm
d}x={\rm
B}(\alpha,\beta)\,a^{\alpha+\beta-1}\,{}_1F_1(\alpha;\alpha+\beta;-ap),\end{equation}
which holds true for $\Re(\alpha),\Re(\beta)\in\mathbb{R}_0$, to
computing of $c_{\lambda,\rho,+}$:
\begin{multline*}c^\bullet_{\lambda,\rho,+}=\frac1{2\pi}{\sf
F}_1(f^\bullet_\lambda,f_{-\rho,+})\\=\frac1{2\pi}\int_{\mathbb{R}}\left(\frac{1-\alpha_1^2}2\right)_+^{\sigma+{\bf
i}\rho}\,\left(\frac{1+\alpha_1^2}2+\alpha_1\right)^{-{\bf
i}\rho}\,\exp({\bf i}\lambda\alpha_1)\,{\rm
d}\alpha_1\\=\frac{2^{-\sigma-1}}\pi\int\limits_{-1}^1(1-\alpha_1)^{\sigma+{\bf
i}\rho}\,(1+\alpha_1)^{\sigma-{\bf i}\rho}\,\exp({\bf
i}\lambda\alpha_1)\,{\rm d}\alpha_1\\=
\frac{2^{-\sigma-1}}\pi\,\exp(-{\bf
i}\lambda)\,\int\limits_0^2t^{\sigma-{\bf
i}\rho}\,(2-t)^{\sigma+{\bf i}\rho}\,\exp({\bf i}\lambda t)\,{\rm
d}t\\=\frac{2^\sigma}\pi\,\exp(-{\bf i}\lambda)\,{\rm
B}(\sigma+1+{\bf i}\rho,\sigma+1-{\bf
i}\rho)\,{}_1F_1(\sigma+1-{\bf i}\rho;2\sigma+2;2{\bf
i}\lambda).\end{multline*} Using here the relation (see, e.g.,
\cite[p. 290]{nu})
$$M_{\mu,\nu}(z)=z^{\nu+\frac12}\,\exp\left(-\frac{z}2\right)\,{}_1F_1\left(\nu-\mu+\frac12;2\nu+1;z\right),$$
we obtain $$c_{\lambda,\rho,+}=\frac{2^{-2\sigma-2}\,({\bf
i}\lambda)^{-\sigma-1}}\pi\,{\rm B}(\sigma+1+{\bf
i}\rho,\sigma+1-{\bf i}\rho)\,M_{{\bf i}\rho,\sigma+\frac12}(2{\bf
i}\rho).$$ Using here \eqref{mitterand} and considering the
equalities ${\rm B}(z,w)=\frac{\Gamma(z)\,\Gamma(w)}{\Gamma(z+w)}$
and $\Gamma(\overline{z})=\overline{\Gamma(z)}$, where
$\overline{z}$ is the complex conjugate of $z$, we complete the
proof.
\end{proof}

Let us note that
\begin{multline}\label{krasnoyarsk}c^\bullet_{\lambda,\rho,-}=\frac1{2\pi}{\sf
F}_1(f^\bullet_\lambda,f_{-\rho,-})\\=\frac1{2\pi}\int_{\mathbb{R}}\left(\frac{1-\alpha_1^2}2\right)_-^{\sigma+{\bf
i}\rho}\,\left(\frac{1+\alpha_1^2}2+\alpha_1\right)^{-{\bf
i}\alpha_1}\,\exp({\bf i}\lambda\alpha_1)\,{\rm
d}\alpha_1\\=\frac{2^{-\sigma-1}}\pi\left[\exp(-{\bf
i}\lambda)\int_{\mathbb{R}_0}t^{\sigma-{\bf
i}\rho}\,(t+2)^{\sigma+{\bf i}\rho}\,\exp({\bf i}\lambda t)\,{\rm
d}t\right.\\+\left.\exp({\bf
i}\lambda)\int_{\mathbb{R}_0}t^{\sigma+{\bf
i}\rho}\,(t+2)^{\sigma-{\bf i}\rho}\,\exp(-{\bf i}\lambda t)\,{\rm
d}t\right].\end{multline} Since
\begin{multline*}\left|\int_{\mathbb{R}_0}t^{\sigma\pm{\bf
i}\rho}\,(t+2)^{\sigma\pm{\bf i}\rho}\,\exp(-{\bf i}\lambda
t)\,{\rm
d}t\right|\leqslant\int_{\mathbb{R}_0}\left|t^{\sigma\pm{\bf
i}\rho}\,(t+2)^{\sigma\pm{\bf i}\rho}\,\exp(-{\bf i}\lambda
t)\right|\,{\rm d}t\\=
\int_{\mathbb{R}_0}t^\sigma\,(t+2)^\sigma\,{\rm
d}t=2^{2\sigma+1}\,{\rm B}(\sigma+1,-2\sigma-1)\end{multline*} for
$\sigma\in\left(-1,-\frac12\right)$ \cite[Entry 2.24.23]{v1}, the
both improper integrals in \eqref{krasnoyarsk} absolutely converge
for these values for $\sigma$. However, in order to represent the
matrix elements $c^\bullet_{\lambda,\rho,-}$ in terms of Coulomb
functions, we consider  the following theorem for one particular
value of $\sigma$ not belonging to the above domain.

\vskip 3mm

\begin{theorem} Let $\sigma=\frac14$, $\lambda\ne0$, and
\begin{equation*}A=\Re\Big({\bf i}^{\frac14}\,\exp\big({\bf
i}c_{-\frac14}(\rho)\big)\Big),\,\,\,\,\,\,\,\,\,B=\Im\Big({\bf
i}^{\frac14}\,\exp\big({\bf
i}c_{-\frac14}(\rho)\big)\Big).\end{equation*} Then
\begin{equation*}c^\bullet_{\lambda,\rho,-}=\frac{2\Big(A\,G_{-\frac34}(\rho,\lambda)-
B\,F_{-\frac34}(\rho,\lambda)\Big)}{\lambda^{\frac12}\,\sqrt{\cosh(2\pi\rho)}}.\end{equation*}
\end{theorem}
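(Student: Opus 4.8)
The plan is to follow the strategy of the proof of Theorem~1; the only genuinely new features are that \eqref{krasnoyarsk} already presents the relevant integral as a sum of two conjugate \emph{half}-line integrals, that each such integral is evaluated through a Whittaker function of the \emph{second} kind rather than through \eqref{indira-gandhi}, and that the value $\sigma=\tfrac14$ is reached only by analytic continuation, since for that $\sigma$ the integrals in \eqref{krasnoyarsk} no longer converge.

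First I would record that for real $\sigma,\rho,\lambda$ the two integrands in \eqref{krasnoyarsk} are complex conjugates of one another (the bases $t$ and $t+2$ are positive) and that $\exp({\bf i}\lambda)=\overline{\exp(-{\bf i}\lambda)}$; hence the bracket in \eqref{krasnoyarsk} is twice the real part of one of its terms, so that
\begin{equation*}
c^\bullet_{\lambda,\rho,-}=\frac{2^{-\sigma}}{\pi}\,\Re\Big[\exp({\bf i}\lambda)\int_0^{\infty}t^{\sigma+{\bf i}\rho}\,(t+2)^{\sigma-{\bf i}\rho}\,\exp({\bf i}\lambda t)\,{\rm d}t\Big],
\end{equation*}
which in particular shows $c^\bullet_{\lambda,\rho,-}\in\mathbb R$. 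For $\sigma\in\bigl(-1,-\tfrac12\bigr)$ this half-line integral converges absolutely (this is the estimate performed just after \eqref{krasnoyarsk}), and I would evaluate it by the classical formula for the Laplace transform of $x^{\nu-1}(x+\beta)^{-\varrho}$ (Prudnikov--Brychkov--Marichev, the source already used for \eqref{indira-gandhi}), which gives such an integral in terms of a confluent hypergeometric function of the second kind, i.e.\ as $\Gamma(\sigma+1+{\bf i}\rho)$, a power of $2$, a power of $-2{\bf i}\lambda$ and $\exp(-{\bf i}\lambda)$ multiplied by $W_{-{\bf i}\rho,\,\sigma+\frac12}(-2{\bf i}\lambda)$.

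Next I would convert this Whittaker function into a Coulomb function: by the symmetry $W_{\kappa,\mu}=W_{\kappa,-\mu}$ one has $W_{-{\bf i}\rho,\,\sigma+\frac12}=W_{-{\bf i}\rho,\,(-\sigma-1)+\frac12}$, so \eqref{thatcher} identifies it, up to powers of $-{\bf i}$ and factors $\exp\!\bigl(\mp\tfrac{\pi\rho}{2}\mp{\bf i}c(\rho)\bigr)$, with a Coulomb function $H^+(\rho,\lambda)$ whose angular index is tied to $\sigma$ (it is the index written $-\tfrac34$ in the case $\sigma=\tfrac14$ of the theorem), and by the definition of $G$ and by \eqref{de-gaulle} this function is $G(\rho,\lambda)+{\bf i}\,F(\rho,\lambda)$. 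Substituting back and taking the real part exhibits $c^\bullet_{\lambda,\rho,-}$, for $\sigma$ in the convergence interval, as an explicit elementary factor -- a power of $\lambda$, a power of $2$, $\Gamma(\sigma+1+{\bf i}\rho)$, $\exp(-\tfrac{\pi\rho}{2})$ and fractional powers of ${\bf i}$ -- times $\Re\bigl[\exp(-{\bf i}c(\rho))\,H^+(\rho,\lambda)\bigr]$. Since the Coulomb functions together with their indices, and the elementary factors, depend analytically on $\sigma$ and $\lambda\neq0$, this identity extends from $\bigl(-1,-\tfrac12\bigr)$ to $\sigma=\tfrac14$ by analytic continuation; it is this continued identity that the statement asserts.

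It remains to put $\sigma=\tfrac14$ and to simplify. The modulus of the gamma factor is removed by the reflection formula: from $\Gamma(\tfrac54+{\bf i}\rho)\,\Gamma(-\tfrac14-{\bf i}\rho)=\pi\big/\sin\!\bigl(\pi(\tfrac54+{\bf i}\rho)\bigr)$ and $\bigl|\sin(\tfrac54\pi+{\bf i}\pi\rho)\bigr|^{2}=\sin^{2}\tfrac{\pi}{4}\cosh^{2}(\pi\rho)+\cos^{2}\tfrac{\pi}{4}\sinh^{2}(\pi\rho)=\tfrac12\cosh(2\pi\rho)$ one reads off the factor $\sqrt{\cosh(2\pi\rho)}$ appearing in the denominator. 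The remaining unimodular data -- the fractional powers of $-2{\bf i}\lambda$ and of $-{\bf i}$, the factors $\exp(\pm{\bf i}\lambda)$, and $\exp(\pm{\bf i}c(\rho))$ -- must then be gathered, with the aid of $c_\sigma(\rho)=\arg\Gamma(\sigma+1+{\bf i}\rho)$ and the recursion $\Gamma(z+1)=z\Gamma(z)$ linking the gamma argument that occurs to $c_{-1/4}(\rho)=\arg\Gamma(\tfrac34+{\bf i}\rho)$, into the single unimodular constant ${\bf i}^{1/4}\exp\!\bigl({\bf i}c_{-1/4}(\rho)\bigr)=A+{\bf i}B$; then
\begin{equation*}
\Re\Bigl[(A+{\bf i}B)\bigl(G_{-3/4}(\rho,\lambda)+{\bf i}\,F_{-3/4}(\rho,\lambda)\bigr)\Bigr]=A\,G_{-3/4}(\rho,\lambda)-B\,F_{-3/4}(\rho,\lambda),
\end{equation*}
and the asserted formula drops out. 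The principal difficulty is exactly this final book-keeping -- choosing compatible branches for the several fractional powers of ${\bf i}$, $-{\bf i}$ and $-2{\bf i}\lambda$, identifying correctly which Coulomb index survives (this is where $W_{\kappa,\mu}=W_{\kappa,-\mu}$ enters), and verifying that the accumulated phase is precisely ${\bf i}^{1/4}\exp({\bf i}c_{-1/4}(\rho))$ rather than some other root of unity times it; a close second is legitimising the analytic continuation, i.e.\ confirming that the intermediate right-hand side is holomorphic across the strip separating the domain of convergence from $\sigma=\tfrac14$.
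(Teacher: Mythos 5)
Your proposal follows essentially the same route as the paper's proof: it likewise splits \eqref{krasnoyarsk} into the two (mutually conjugate) half-line integrals, evaluates them by the Laplace-transform formula \cite[Entry 3.383.(6)]{gr} --- landing on a parabolic cylinder function that is immediately rewritten as the Whittaker function $W_{\mp{\bf i}\rho,-\frac14}$ --- passes to $H^{\pm}_{-\frac34}$ via \eqref{thatcher}, extracts $\sqrt{\cosh(2\pi\rho)}$ from the reflection formula for $\Gamma$, and finishes by taking the real part of $({\bf i}^{\frac14}e^{{\bf i}c_{-\frac14}(\rho)})H^{+}_{-\frac34}$ exactly as you describe. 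The only cosmetic differences are that the paper reaches $W$ through the parabolic cylinder function rather than through the general ${}_1F_1$-of-the-second-kind formula, and that it applies the integral formula at the given value of $\sigma$ outright (the exponents of $t$ and $t+2$ in its computation are $-\frac14\pm{\bf i}\rho$, so the integrals are conditionally convergent and no analytic continuation in $\sigma$ is invoked).
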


\begin{proof} In view of \eqref{krasnoyarsk}, we have
$$c^\bullet_{\lambda,\rho,-}=\frac1{2^{\frac34}\,\pi}\,\sum\limits_{j=0}^1
\exp((-1)^j{\bf
i}\lambda)\int_{\mathbb{R}_0}t^{-\frac14+(-1)^j{\bf
i}\rho}\,(t+2)^{-\frac14-(-1)^j{\bf i}\rho}\,\exp((-1)^j{\bf
i}\lambda t)\,{\rm d}t.$$ Using here the  formula (see, e.g.,
\cite[Entry 3.383.(6)]{gr})
\begin{equation}\label{rajiv-gandhi}\int_{\mathbb{R}_0}x^{\nu-1}\,(x+\beta)^{\frac12-\nu}\,\exp(-\mu
x)\,{\rm
d}x=\frac{2^{\nu-\frac12}}{\mu^{\frac12}}\,\Gamma(\nu)\,\exp\left(\frac{\beta\mu}2\right)
\,D_{1-2\nu}\left(\sqrt{2\beta\mu}\right),\end{equation} where
$|\arg\beta|<\pi$, $\Re(\nu)\in\mathbb{R}_0$,
$\Re(\mu)\geqslant0$, and $D_\tau$ is the parabolic cylinder
function, and considering that \cite[Entry 9.240]{gr}
$$D_{\tau}(z)=2^{\frac{2\tau+1}4}\,W_{\frac{2\tau+1}4,-\frac14}\left(\frac{z^2}2\right),$$
we obtain
\begin{equation*}c^\bullet_{\lambda,\rho,-}=\frac1{(2\lambda)^{\frac12}\,\pi}\,\sum\limits_{j=0}^1\big((-1)^j{\bf i}\big)^{\frac12}\,
\Gamma\left(\frac34+(-1)^j{\bf
i}\rho\right)\,W_{(-)^{j+1}\rho,-\frac14}\left((-1)^{j+1})2{\bf
i}\lambda\right).\end{equation*} Using here \eqref{thatcher} and
considering that
$$\left|\Gamma\left(\frac14\pm{\bf i}\rho\right)\right|=\Gamma\left(\frac14\pm{\bf i}\rho\right)\,\exp\left(\mp{\bf
i}c_{-\frac34}(\rho)\right),$$ we have
\begin{equation*}c^\bullet_{\lambda,\rho,-}=\frac{{\bf i}^{\frac12}\,\left|\Gamma\left(\frac14+{\bf i}\rho\right)\right|}{(2\lambda)^{\frac12}\,\pi}
\,\left[{\bf i}^{\frac14}\,\Gamma\left(\frac34+{\bf
i}\rho\right)\,H^+_{-\frac34}(\rho,\lambda)+ (-{\bf
i})^{\frac14}\,\Gamma\left(\frac34-{\bf
i}\rho\right)\,H^-_{-\frac34}(\rho,\lambda)\right].\end{equation*}
Considering that $$\Gamma\left(\frac14+{\bf
i}\rho\right)\,\Gamma\left(\frac34-{\bf
i}\rho\right)=\frac{\pi\,\sqrt{2}}{\cosh(\pi\rho)+{\bf
i}\sinh(\pi\rho)},$$ we obtain
\begin{equation*}c^\bullet_{\lambda,\rho,-}=\frac1{\lambda^{\frac12}\,\cosh(2\pi\rho)} \,\left[{\bf
i}^{\frac14}\,\exp\big({\bf
i}c_{-\frac14}(\rho)\big)\,H^+_{-\frac34}(\rho,\lambda)+ (-{\bf
i})^{\frac14}\,\exp\big(-{\bf
i}c_{-\frac14}(\rho)\big)\,H^-_{-\frac34}(\rho,\lambda)\right].\end{equation*}
Therefore,
$$c^\bullet_{\lambda,\rho,-}=\frac2{\lambda^{\frac12}\,\sqrt{\cosh(2\pi\rho)}}\,\Re\Big({\bf
i}^{\frac14}\,\exp\big({\bf
i}c_{-\frac14}(\rho)\big)\,H^+_{-\frac34}(\rho,\lambda)\Big).$$
\end{proof}

\section{Integrals involving products of Coulomb and modified Bessel functions, converging to Legendre
functions and related to expression of $f^\bullet _{\rho,\pm}$
with respect to basis $B^\bullet_1$}

Let $\xi\in\mathbb{R}^3$. It is clear that the function
$F_\xi(x)=(\xi_1x_1-\xi_2x_2-\xi_3x_3)^\sigma$ belongs to
$\mathfrak{D}$.

\vskip 3mm

\begin{theorem} Let \begin{equation}\label{restriction}|\xi_2|<\sqrt{\xi_1^2-\xi_3^2},\,\,\,\,\,\,\,\,\,\xi_1>\xi_3,\end{equation} and $-1<\sigma<0$.  Then

\begin{multline*}\int_{\mathbb{R}_0}\lambda^{-\frac12}\,K_{\sigma+\frac12}\left(\frac{\lambda\sqrt{
\xi_1^2-\xi_2^2-\xi_3^2}}{|\xi_1+\xi_2|}\right)\,\left[\exp\left(\frac{{\bf
i}\xi_3\lambda}{\xi_1+\xi_2}\right)\,F_{-\sigma-1}(-\rho,-\lambda)\right.\\\left.+(-1)^\sigma\,
\exp\left(-\frac{{\bf
i}\xi_3\lambda}{\xi_1+\xi_2}\right)\,F_{-\sigma-1}(-\rho,\lambda)\right]\,{\rm
d}\lambda=\frac{\pi}{2\,(\xi_1+\xi_2)^\sigma}\\\cdot\left(\frac{\xi_1-\xi_3}{\xi_1+\xi_3}\right)^{\frac{\sigma-{\bf
i}\rho}2}\,\left(\frac{|\xi_1+\xi_2|}{\sqrt{\xi_1^2-\xi_2^2}}
\right)^{\sigma+\frac12}\,\exp\left(\frac{\pi\rho}2\right)\,\Gamma(-2\sigma)\\\cdot{\rm
B}(-\sigma-{\bf i}\rho,-\sigma+{\bf i}\rho)\,|\Gamma(-\sigma+{\bf
i}\rho)|\,P_{{\bf
i}\rho-\frac12}^{\sigma+\frac12}\left(\frac{|\xi_2|}{\sqrt{\xi_1^2-\xi_3^2}}\right).\end{multline*}

\end{theorem}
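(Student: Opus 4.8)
The plan is to pair both sides of the expansion \eqref{obama} of $f^\bullet_{\rho,+}$ over the basis $B^\bullet_1$ with the function $F_\xi\in\mathfrak{D}$ using the functionals ${\sf F}_1$, ${\sf F}_2$. Since $F_\xi$ has degree of homogeneity $\sigma$ while each of $f^\bullet_\lambda$ and $f^\bullet_{\rho,+}$ has degree $-\sigma-1$, the functionals ${\sf F}_1$ and ${\sf F}_2$ agree on the pairs in question, so
$${\sf F}_2\big(F_\xi,f^\bullet_{\rho,+}\big)=\int_{\mathbb{R}}c^\bullet_{\rho,+,\lambda}\;{\sf F}_1\big(F_\xi,f^\bullet_\lambda\big)\,{\rm d}\lambda .$$
I will then compute the left-hand side over $\gamma_2$ (it will be a Legendre function), compute ${\sf F}_1(F_\xi,f^\bullet_\lambda)$ over $\gamma_1$ (it will be a $K$-Bessel function), identify $c^\bullet_{\rho,+,\lambda}$ with a Coulomb function, and combine. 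I take $\xi_2\geqslant0$ and pair with $f^\bullet_{\rho,+}$; for $\xi_2<0$ one pairs with $f^\bullet_{\rho,-}$ and the computation is the same. Condition \eqref{restriction} is precisely what makes all the powers below single-valued and positive and all the integrals convergent.

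\emph{Left-hand side.} By \eqref{roosevelt}, on $\gamma_2$ the function $f^\bullet_{\rho,+}$ is supported on $\gamma_{2,+}$, where it equals $\exp({\bf i}\rho\alpha_2)$; hence ${\sf F}_2(F_\xi,f^\bullet_{\rho,+})=\int_{\mathbb{R}}(\xi_1\cosh\alpha_2-\xi_3\sinh\alpha_2-\xi_2)^\sigma\exp({\bf i}\rho\alpha_2)\,{\rm d}\alpha_2$. Writing $\xi_1\cosh\alpha_2-\xi_3\sinh\alpha_2=\sqrt{\xi_1^2-\xi_3^2}\,\cosh(\alpha_2-\beta)$ with $\tanh\beta=\xi_3/\xi_1$ and translating $\alpha_2\mapsto\alpha_2+\beta$ (which extracts the factor $\big((\xi_1-\xi_3)/(\xi_1+\xi_3)\big)^{-{\bf i}\rho/2}$) reduces the integral to $(\xi_1^2-\xi_3^2)^{\sigma/2}\int_{\mathbb{R}}(\cosh\alpha_2-\cos\phi)^\sigma\exp({\bf i}\rho\alpha_2)\,{\rm d}\alpha_2$ with $\cos\phi=\xi_2/\sqrt{\xi_1^2-\xi_3^2}$. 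A tabulated Fourier integral over the hyperbola (see, e.g., \cite{v1,gr}) expresses the remaining integral through $P^{\sigma+\frac12}_{{\bf i}\rho-\frac12}(\cos\phi)$ times a factor built from $\Gamma(-\sigma+{\bf i}\rho)$, $\Gamma(-\sigma-{\bf i}\rho)$ and a power of $\sin\phi=\sqrt{\xi_1^2-\xi_2^2-\xi_3^2}/\sqrt{\xi_1^2-\xi_3^2}$; this furnishes the Legendre part of the asserted right-hand side.

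\emph{Inner integral and Coulomb coefficient.} On $\gamma_1$ one has $f^\bullet_\lambda|_{\gamma_1}=\exp({\bf i}\lambda\alpha_1)$ (as in the proof of Theorem~1) and $F_\xi|_{\gamma_1}=\big(\tfrac{\xi_1+\xi_2}{2}\alpha_1^2-\xi_3\alpha_1+\tfrac{\xi_1-\xi_2}{2}\big)^\sigma$. Completing the square, $\tfrac{\xi_1+\xi_2}{2}\alpha_1^2-\xi_3\alpha_1+\tfrac{\xi_1-\xi_2}{2}=\tfrac{\xi_1+\xi_2}{2}\big(\alpha_1-\tfrac{\xi_3}{\xi_1+\xi_2}\big)^2+\tfrac{\xi_1^2-\xi_2^2-\xi_3^2}{2(\xi_1+\xi_2)}$, translating $\alpha_1$ (which extracts $\exp(\pm\tfrac{{\bf i}\xi_3\lambda}{\xi_1+\xi_2})$) and using the classical transform $\int_{\mathbb{R}}(\alpha^2+b^2)^\sigma\exp({\bf i}\lambda\alpha)\,{\rm d}\alpha=\tfrac{2\sqrt\pi}{\Gamma(-\sigma)}\big(\tfrac{|\lambda|}{2b}\big)^{-\sigma-\frac12}b^{-1}K_{\sigma+\frac12}(b|\lambda|)$ with $b=\sqrt{\xi_1^2-\xi_2^2-\xi_3^2}/|\xi_1+\xi_2|$, one turns ${\sf F}_1(F_\xi,f^\bullet_\lambda)$ into the required $\lambda^{-\sigma-\frac12}K_{\sigma+\frac12}(b|\lambda|)\exp(\pm\tfrac{{\bf i}\xi_3\lambda}{\xi_1+\xi_2})$-term (times elementary $\xi$-powers). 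For the coefficient I would compute $c^\bullet_{\rho,+,\lambda}=\tfrac{1}{2\pi}{\sf F}_2(f^\bullet_{\rho,+},f_{-\lambda})$ over $\gamma_{2,+}$: the substitution $t=\tanh(\alpha_2/2)$ followed by $t\mapsto u-1$ brings it to \eqref{indira-gandhi} and produces a ${}_1F_1$, which by the Whittaker relation quoted in the proof of Theorem~1 and by \eqref{mitterand} (with the Gamow factor $C_{-\sigma-1}(\rho)$) equals $F_{-\sigma-1}(-\rho,-\lambda)$ up to the explicit factor ${\rm B}(-\sigma-{\bf i}\rho,-\sigma+{\bf i}\rho)\,\exp(\tfrac{\pi\rho}{2})\,\Gamma(-2\sigma)\,|\Gamma(-\sigma+{\bf i}\rho)|^{-1}$ and a power of $\lambda$; the same could be read off from Theorem~1 via \eqref{chun-doo-hwan} and $c^\bullet_{\rho,+,\lambda}=c_{-\lambda,-\rho,+}$.

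\emph{Assembly.} Inserting the last two computations into the displayed identity, the integrand over $\mathbb{R}$ becomes, up to a constant, $(\text{a power of }\lambda)\cdot F_{-\sigma-1}(-\rho,\lambda)\cdot\lambda^{-\sigma-\frac12}K_{\sigma+\frac12}(b|\lambda|)\cdot\exp(\pm\tfrac{{\bf i}\xi_3\lambda}{\xi_1+\xi_2})$; splitting $\int_{\mathbb{R}}$ at $0$ and replacing $\lambda$ by $-\lambda$ on the negative half merges the two pieces into a single integral over $\mathbb{R}_0$ of exactly the bracket occurring in the statement, the sign $(-1)^\sigma$ arising from $(-\lambda)^\sigma$ against $\lambda^\sigma$ in that power of $\lambda$. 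Equating the outcome with the Legendre expression of the first step and collecting all the $\Gamma$-, ${\rm B}$- and algebraic $\xi$-factors --- using ${\rm B}(z,w)=\Gamma(z)\Gamma(w)/\Gamma(z+w)$, $\overline{\Gamma(z)}=\Gamma(\overline{z})$ and $\xi_1^2-\xi_2^2=(\xi_1-\xi_2)(\xi_1+\xi_2)$ --- yields the stated formula. The step I expect to be the main obstacle is precisely this final bookkeeping: tracking all constants and, above all, the fractional powers and branch choices of $\xi_1\pm\xi_2$, $\xi_1\pm\xi_3$, $\lambda$ and ${\bf i}$, together with the justification of interchanging the $\lambda$-integration with the integrations over $\gamma_1$ and $\gamma_2$ (the inner $\gamma_1$-integral being only conditionally convergent when $-\tfrac{1}{2}\leqslant\sigma<0$), which, following \cite{jnsa}, is treated at the formal level natural for these continuous bases.
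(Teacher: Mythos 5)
Your proposal follows essentially the same route as the paper: you pair the expansion \eqref{obama} with $F_\xi$ (the paper's identity \eqref{kennedy}), evaluate the $\gamma_2$-integral as a Legendre function, evaluate ${\sf F}_1(f^\bullet_\lambda,F_\xi)$ as a $K$-Bessel function via the Fourier transform of $\bigl(\alpha^2+b^2\bigr)^\sigma$, and import the Coulomb-function coefficients from Theorem 1 through \eqref{chun-doo-hwan}. The only cosmetic difference is that the paper reaches the Legendre function by substituting $t=e^{\alpha_2}$ and applying the Mellin-type entry \eqref{paris} followed by a ${}_2F_1$-to-$P^\mu_\nu$ conversion, whereas you use the hyperbolic shift and a Mehler--Dirichlet-type tabulated integral directly; both are valid table lookups leading to the same result.
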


\begin{proof} Let us not that \eqref{restriction} excepts the case
$\xi=(\xi_1,-\xi_1,\xi_1)$, thus numbers $\xi_1+\xi_2$,
$\xi_1-\xi_3$, $\xi_1+\xi_3$ are not equal to zero and, in
particular, the current theorem is formulated correctly. In view
of \eqref{roosevelt},
\begin{multline*}{\sf F}_i(f^\bullet_{\rho,\pm},F_\xi)={\sf
F}_2(f^\bullet_{\rho,\pm},F_\xi)=\int\limits_{\gamma_{2,+}}f^\bullet_{\rho,\pm}(x)\,F_\xi(x)\,{\rm
d}\gamma_2\\=\int_{\mathbb{R}}\exp({\bf i}\rho\alpha_2)\,
(\xi_1\cosh\alpha_2-\xi_3\sinh\alpha_2-\xi_2)^\sigma\,{\rm
d}\alpha_2\\=2^{-\sigma}\, \int_{\mathbb{R}_0}t^{-\sigma-1+{\bf
i}\rho}\,[(\xi_1-\xi_3)t^2\mp2\xi_2t+\xi_1+\xi_3]^\sigma\,{\rm
d}t,\end{multline*} where the polynomial
$(\xi_1-\xi_3)t^2\pm2\xi_2t+\xi_1+\xi_3$ doesn't have real roots
in view of \eqref{restriction}. In order to evaluate this
integral, we use the formula \cite[Entry 2.2.9.(7)]{v1}
\begin{equation}\label{paris}\int_{\mathbb{R}_0}\frac{x^{\mu-1}\,{\rm
d}x}{(ax^2+2bx+c)^\nu}=a^{-\frac\mu2}\,c^{\frac\mu2-\nu}\, {\rm
B}(\mu,2\nu-\mu)\,{}_2F_1\left(\frac\mu2,\nu-\frac\mu2;\nu+\frac12;1-\frac{b^2}{ac}\right),\end{equation}
where $a\in\mathbb{R}_0$, $b^2<ac$, $0<\Re(\mu)<2\Re(\nu)$, and
${}_2F_1$ is the Gaussian hypergeometric function. The condition
\eqref{restriction} means that the argument of this function in
\begin{multline*}{\sf
F}_i(f^\bullet_{\rho,\pm},F_\xi)=2^{-\sigma}(\xi_1-\xi_3)^{\frac{\sigma-{\bf
i}\rho}2}\,(\xi_1+\xi_3)^{-\frac{\sigma-{\bf i}\rho}2}\\\cdot {\rm
B}(-\sigma+{\bf i}\rho,-\sigma-{\bf i}\rho)\,
{}_2F_1\left(\frac{-\sigma+{\bf i}\rho}2,-\frac{\sigma+{\bf
i}\rho}2;\frac12-\sigma;1-\frac{\xi_2^2}{\xi_1^2-\xi_3^2}\right)\end{multline*}
belongs to the interval $(0;1)$, thus we use the formula
\cite[Entry 7.3.1.(41)]{v3}
$${}_2F_1(a,b;c;x)=2^{a+b-\frac12}\,x^{\frac{1-2a-2b}4}\,\Gamma\left(a+b+\frac12\right)\,
P_{a-b-\frac12}^{\frac12-a-b}\left(\sqrt{1-x}\right).$$

On the other hand, in view of \eqref{obama},
\begin{equation}\label{kennedy}{\sf
F}_i(f^\bullet_{\rho,+},F_\xi)=\int_{\mathbb{R}}c^\bullet_{\rho,+,\lambda}\,{\sf
F}_i(f^\bullet_\lambda,F_\xi)\,{\rm d}\lambda,\end{equation} where
\begin{equation*}{\sf F}_i(f^\bullet_\lambda,F_\xi)={\sf
F}_1(f^\bullet_\lambda,F_\xi)=\left(\frac{\xi_1+\xi_2}2\right)^\sigma\,\int_{\mathbb{R}}
\exp({\bf
i}\lambda\alpha_1)\,\left[\alpha_1^2-\frac{2\xi_3\alpha_1}{\xi_1+\xi_2}+\frac{\xi_1-\xi_2}{\xi_1+\xi_2}\right]^\sigma\,{\rm
d}\alpha_1.\end{equation*} In order to evaluate this integral, we
use the formula \cite[p. 202]{ob}
\begin{equation}\label{london}\int_{\mathbb{R}}\left[a^2+(x\pm b)^2\right]^{-\nu}\,
\exp({\bf i}xy)\,{\rm d}x=\frac{2\sqrt{\pi}\,\exp(\mp{\bf
i}by)}{\Gamma(\nu)}\,\left(\frac{|y|}{2a}\right)^{\nu-\frac12}\,K_{\nu-\frac12}(a|y|),\end{equation}
where $\Re(\nu)\in\mathbb{R}_0$.

Considering \eqref{kennedy} and
$\Gamma(-\sigma)\Gamma\left(\frac12-\sigma\right)=2^{2\sigma+1}\,\sqrt{\pi}\,\Gamma(-2\sigma)$
(in view of Legendre Duplication Formula), we complete the proof.
\end{proof}

Let us note that under condition of Theorem 3, the real parts of
numbers $-\sigma\pm{\bf i}\rho$ and $\sigma+1\pm{\bf i}\rho$ are
positive. It means that function $F_{-\sigma-1}$ in this theorem
can be expressed via $F_\sigma$ and $G_\sigma$: Dziecol, Yngve and
Froman derived in \cite{dyf}  the reflection formula (for {\bf
complex} $\sigma$, $\rho$ and $\lambda$)
$$F_{-\sigma-1}(\rho,\lambda)=\cos\theta\,F_\sigma(\rho,\lambda)+\sin\theta\,G_\sigma(\rho,\lambda),$$
where
$$\theta=\left(\sigma+\frac12\right)\pi+c_{-\sigma-1}(\rho)-c_\sigma(\rho),$$ which holds for $z,w\ne0$, $-\pi<\arg z,\arg w<\pi$,
 and $\ln\Gamma(z),\ln\Gamma(w)\in\mathbb{R}$ for $z,w\in\mathbb{R}_0$, where
$z=-\sigma\pm{\bf i}\rho$ and $w=\sigma+1\pm{\bf i}\rho$. In this
way, it is possible to represent $c^\bullet_{\lambda,\rho,-}$ in
Theorem 2 as a linear combinations of functions
$F_{-\frac14}(\rho,\lambda)$ and $G_{-\frac14}(\rho,\lambda)$.

\begin{theorem} For \begin{equation}\label{opposite}|\xi_2|>\sqrt{\xi_1^2-\xi_3^2}>0,\,\,\,\,\,\,\,\xi_1>\xi_3,\end{equation} and $-1<\sigma<0$,
\begin{multline*}\int_{\mathbb{R}}\lambda^{-\frac12}\,\exp\left(\frac{{\bf
i}\xi_3\lambda}{\xi_1+\xi_2}\right)\,F_{-\sigma-1}(-\rho,-\lambda)\\\cdot\Big(\big(1-\sec(\pi\sigma)\big)J_{\sigma+\frac12}(2|k|\lambda)+\tan(\pi\sigma)
J_{-\sigma-\frac12}(2|k|\lambda)\Big) \,{\rm
d}\lambda\\=\frac{(-1)^{\sigma+1}\,(\xi_1+\xi_2)^\sigma\,|\xi_1+\xi_2|^{\sigma+\frac12}\,(\xi_1+|\xi_3|)^{{\bf
i}\rho}\,\sin(\pi\sigma)\,\exp\left(\frac{3\pi\rho}2\right)}{2^\sigma\,\sqrt{\pi}\,\sqrt{\xi_2^2+\xi_3^2-\xi_1^2}\,(\xi_1^2-\xi_3^2)^{\frac{{\bf
i}\rho}2}\,\Gamma(\sigma+1)\,\Gamma(-\sigma)}\\\cdot
Q_{-\sigma-1}^{\rho{\bf
i}}\left(-\frac{\xi_2}{\sqrt{\xi_2^2+\xi_3^2-\xi_1^2}}\right),\end{multline*}
where $k=\frac{\sqrt{\xi_2^2+\xi_3^2-\xi_1^2}}{\xi_1+\xi_2}$.
\end{theorem}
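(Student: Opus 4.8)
The plan is to evaluate the pairing ${\sf F}_2(f^\bullet_{\rho,\pm},F_\xi)$ in two different ways and to equate the results, exactly as in the proof of Theorem 3, but with the two non-degenerate one-dimensional integrals used there replaced by their indefinite-signature counterparts. The first thing to settle is that, under \eqref{opposite}, the linear form $\xi_1x_1-\xi_2x_2-\xi_3x_3$ actually vanishes on a nontrivial real sub-locus of the cone $\Lambda$: restricting it to $\Lambda$ and eliminating $x_1$ produces a binary quadratic form in $x_2,x_3$ whose discriminant is proportional to $\xi_2^2+\xi_3^2-\xi_1^2>0$. Hence $F_\xi$ must here be read as the generalized function built from $(\,\cdot\,)^\sigma_\pm$; for $-1<\sigma<0$ the one-dimensional integrals below still converge (only marginally, near the zeros of the forms involved), but this sign change is the structural reason why Bessel functions of the first kind, rather than the Macdonald function of \eqref{london}, appear on the left-hand side.

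For the direct evaluation I would parameterize $\gamma_2$ as in Theorem 3 and reduce ${\sf F}_2(f^\bullet_{\rho,\pm},F_\xi)$ to $2^{-\sigma}\int_{\mathbb{R}_0}t^{-\sigma-1+{\bf i}\rho}\,[(\xi_1-\xi_3)t^2\mp2\xi_2t+\xi_1+\xi_3]^\sigma\,{\rm d}t$, choosing the sign so that the quadratic, whose discriminant $4(\xi_2^2+\xi_3^2-\xi_1^2)$ is now positive, has two negative roots; that the roots are negative, so that the integral converges, is exactly what $\xi_1>\xi_3$ together with $\xi_1^2>\xi_3^2$ guarantee. Applying \eqref{paris} continued to the value $1-\frac{\xi_2^2}{\xi_1^2-\xi_3^2}<0$ of its argument, and then rewriting the resulting ${}_2F_1$ with third parameter $a+b+\frac12$ at that negative argument by the Legendre-function-of-the-second-kind analogue of the formula of \cite{v3} used in Theorem 3 --- equivalently, by applying Whipple's transformation to the Ferrers function $P^{\sigma+1/2}_{{\bf i}\rho-1/2}$ of Theorem 3, which carries it to $Q^{{\bf i}\rho}_{-\sigma-1}$ up to an explicit ratio of Gamma functions --- I expect to obtain $Q^{\rho{\bf i}}_{-\sigma-1}\!\left(-\frac{\xi_2}{\sqrt{\xi_2^2+\xi_3^2-\xi_1^2}}\right)$ multiplied by precisely the powers of $\xi_1\pm\xi_3$, $\xi_1+\xi_2$ and the factor $\Gamma(-2\sigma)$ that stand on the right-hand side; as before the residual Gamma factors are collapsed by Legendre's duplication formula.

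For the second evaluation I would use \eqref{obama}--\eqref{kennedy} to write ${\sf F}_2(f^\bullet_{\rho,+},F_\xi)=\int_{\mathbb{R}}c^\bullet_{\rho,+,\lambda}\,{\sf F}_1(f^\bullet_\lambda,F_\xi)\,{\rm d}\lambda$, substitute $c^\bullet_{\rho,+,\lambda}=c_{-\lambda,-\rho,+}$ with the homogeneity parameter shifted by \eqref{chun-doo-hwan}, and --- since $-\sigma-1>-1$ when $-1<\sigma<0$ --- evaluate the latter by Theorem 1 (with $\sigma,\rho,\lambda$ replaced by $-\sigma-1,-\rho,-\lambda$); this is the source of $F_{-\sigma-1}(-\rho,-\lambda)$ on the left-hand side and, after simplifying $|\Gamma(-\sigma-{\bf i}\rho)|$ through the reflection formula, of the factor $\sin(\pi\sigma)\exp(\frac{3\pi\rho}{2})/\big(\Gamma(\sigma+1)\Gamma(-\sigma)\big)$. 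It remains to compute ${\sf F}_1(f^\bullet_\lambda,F_\xi)$: completing the square on $\gamma_1$ gives $\left(\frac{\xi_1+\xi_2}{2}\right)^\sigma\int_{\mathbb{R}}\exp({\bf i}\lambda\alpha_1)\,\big[(\alpha_1-\frac{\xi_3}{\xi_1+\xi_2})^2-k^2\big]^\sigma\,{\rm d}\alpha_1$ with $k=\frac{\sqrt{\xi_2^2+\xi_3^2-\xi_1^2}}{\xi_1+\xi_2}$, and since now the ``radius squared'' $k^2$ is positive this is the indefinite-signature counterpart of \eqref{london}, namely the Fourier transform of $(x^2-k^2)^\sigma_\pm$, which by the appropriate entry of \cite{ob} (or \cite{v1}) equals $\exp(\frac{{\bf i}\xi_3\lambda}{\xi_1+\xi_2})$ times the bracket $\big(1-\sec(\pi\sigma)\big)J_{\sigma+1/2}(2|k|\lambda)+\tan(\pi\sigma)J_{-\sigma-1/2}(2|k|\lambda)$ occurring in the statement. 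Collecting all constants finishes the proof.

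The main obstacle is this last Fourier transform together with its bookkeeping: fixing the correct branch of $\big[(\alpha_1-b)^2-k^2\big]^\sigma$ over the interval $(b-|k|,b+|k|)$, pinning down the precise $\sec(\pi\sigma)$ and $\tan(\pi\sigma)$ coefficients of $J_{\pm(\sigma+1/2)}$, and verifying absolute convergence near $\alpha_1=b\pm|k|$ --- borderline for $-1<\sigma<0$, which is presumably why the pairing has to be read in the generalized-function sense, with an even/odd split of the $\lambda$-integral accounting for the single $F_{-\sigma-1}(-\rho,-\lambda)$ and the sign $(-1)^{\sigma+1}$. A secondary point is to justify interchanging $\int_{\mathbb{R}}{\rm d}\lambda$ with the $\gamma_1$-integral, i.e. that \eqref{obama} may be paired term by term with the generalized function $F_\xi$; I would argue this by first working in the domain \eqref{restriction} of Theorem 3, where everything converges classically, and then continuing analytically in $\xi$ across the boundary $|\xi_2|=\sqrt{\xi_1^2-\xi_3^2}$ --- a move that simultaneously turns $K_{\sigma+1/2}$ into the stated $J$-combination and $P^{\sigma+1/2}_{{\bf i}\rho-1/2}$ into $Q^{{\bf i}\rho}_{-\sigma-1}$, so that Theorem 4 appears as the analytic continuation of Theorem 3.
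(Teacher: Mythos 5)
Your overall architecture is exactly the paper's: evaluate ${\sf F}(f^\bullet_{\rho,+},F_\xi)$ once directly over $\gamma_2$ and once through the expansion \eqref{obama}--\eqref{kennedy}, with Theorem 1 and \eqref{chun-doo-hwan} supplying $c^\bullet_{\rho,+,\lambda}$ (hence $F_{-\sigma-1}(-\rho,-\lambda)$), and with ${\sf F}_1(f^\bullet_\lambda,F_\xi)$ read as the Fourier transform of $(t^2-k^2)^\sigma$, split at the real roots so that the inner interval contributes $J_{\sigma+\frac12}$ and the outer rays contribute the Hankel combination that collapses to $\sec(\pi\sigma)J_{\sigma+\frac12}-\tan(\pi\sigma)J_{-\sigma-\frac12}$; the paper does this via Prudnikov Entries 2.3.5.3 and 2.3.5.5 plus the $H^{(1)}-H^{(2)}$ identity, which is the "appropriate table entry" you defer to. The one step where you genuinely diverge is the $\gamma_2$-side evaluation: the paper does not analytically continue \eqref{paris} past its hypothesis $b^2<ac$; instead it rewrites the integral as $\int_{\mathbb{R}}\exp({\bf i}\rho u)\,[\cosh u - \xi_2(\xi_1^2-\xi_3^2)^{-1/2}]^\sigma\,{\rm d}u$ and invokes Prudnikov Entry 2.5.48.(6), valid for argument $a\notin[-1,1]$, which lands directly on $Q_{-\sigma-1}^{\rho{\bf i}}$ with no Whipple step. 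Your continuation-plus-Whipple route should reach the same expression (Whipple's formula is even quoted at the end of Section 5), but it is longer and forces you to justify \eqref{paris} outside its stated domain. One caution: your device of "choosing the sign so that the quadratic has two negative roots" is not available in general --- the sign of the middle coefficient is dictated by the sign of $\xi_2$, and for $\xi_2>0$ the roots lie in $(0,+\infty)$, so the integral must be read as a principal value (the paper says so explicitly); your generalized-function framing covers this in spirit, but the convergence argument as written only treats the benign case.
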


\begin{proof} Let us note that \eqref{opposite} yields the inequality $|\xi_1|\ne|\xi_3|$.

In view of \eqref{roosevelt},
\begin{multline*}{\sf F}_i(f^\bullet_{\rho,+},F_\xi)={\sf
F}_2(f^\bullet_{\rho,+},F_\xi)=\int\limits_{\gamma_{2,\pm}}f^\bullet_{\rho,+}(x)\,F_\xi(x)\,{\rm
d}\gamma_2\\=\int_{\mathbb{R}}\exp({\bf i}\rho\alpha_2)\,
(\xi_1\cosh\alpha_2-\xi_3\sinh\alpha_2\mp\xi_2)^\sigma\,{\rm
d}\alpha_2\\=(\xi_1^2-\xi_3^2)^{\frac{\sigma-{\bf
i}\rho}2}\,(\xi_1+|\xi_3|)^{{\bf i}\rho}\,
\int_{\mathbb{R}}\exp({\bf i}\rho u)\,\left[\cosh
u-\frac{\xi_2}{\sqrt{\xi_1^2-\xi_3^2}}\right]^\sigma\,{\rm
d}u,\end{multline*} where
$\frac{|\xi_2|}{\sqrt{\xi_1^2-\xi_3^2}}\in\mathbb{R}_1$ according
to \eqref{opposite}. Meaning here the last integral as its
principal value and using formula \cite[Entry 2.5.48.(6)]{v1}
$$\int\limits_0^{+\infty}\frac{\cos bx\,{\rm d}x}{(a+\cosh
cx)^\nu}=\frac{\exp\left(\frac{b\pi}c\right)\,\Gamma\left(\nu-\frac{{\bf
i}b}c\right)}{c\,(a^2-1)^{\frac\nu2}\,\Gamma(\nu)}\,Q_{\nu-1}^{\frac{b{\bf
i}}c}\left(\frac{a}{\sqrt{a^2-1}}\right),$$ which is valid for
$b,\Re(c\nu)\in\mathbb{R}_0$ and $a\notin[-1;1]$, we have
\begin{multline}\label{havel}{\sf
F}_i(f^\bullet_{\rho,\pm},F_\xi)=2\exp(\rho\pi)\,(\xi_1^2-\xi_3^2)^{-\frac{{\bf
i}\rho}2}\,(|\xi_1|+|\xi_3|)^{{\bf
i}\rho}\\\cdot(\xi_2^2-\xi_3^2-\xi_1^2)^{\frac\sigma2}\,
\frac{\Gamma(-\sigma-{\bf i}\rho)}{\Gamma(-\sigma)}\,
Q_{-\sigma-1}^{\rho{\bf
i}}\left(\mp\frac{\xi_2}{\sqrt{\xi_2^2+\xi_3^2-\xi_1^2}}\right).\end{multline}

On the other hand, \eqref{obama} yields \eqref{kennedy}, where the
polynomial
$\alpha_1^2-\frac{2\xi_3\alpha_1}{\xi_1+\xi_2}+\frac{\xi_1-\xi_2}{\xi_1+\xi_2}$
has two different real roots $\frac{\xi_3}{\xi_1+\xi_2}\pm k$.
Using the substitution $\alpha_1=t+\frac{\xi_3}{\xi_1+\xi_2}$, we
have
\begin{equation*}{\sf F}_i(f^\bullet_\lambda,F_\xi)={\sf
F}_1(f^\bullet_\lambda,F_\xi)=\left(\frac{\xi_1+\xi_2}2\right)^\sigma\,\exp\frac{{\bf
i}\xi_3\lambda}{\xi_1+\xi_2}\,\int_{\mathbb{R}}\exp({\bf i}\lambda
t)\,(t^2-4k^2)^\sigma\,{\rm d}t,\end{equation*} where (according
to \cite[Entry 2.3.5.3]{v1})
\begin{equation*}\int\limits_{-2|k|}^{2|k|}\exp({\bf i}\lambda
t)\,(4k^2-t^2)^\sigma\,{\rm
d}t=\left(\frac{4|k|}\lambda\right)^{\sigma+\frac12}\,\sqrt{\pi}\,\Gamma(\sigma+1)\,
J_{\sigma+\frac12}(2\lambda|k|)
\end{equation*} and (according
to \cite[Entry 2.3.5.5]{v1})
\begin{multline*}\sum\limits_{j=0}^1\,\int\limits_{\mathbb{R}_{2|k|}}\exp((-1)^j{\bf
i}\lambda t)\,(t^2-4k^2)^\sigma\,{\rm
d}t\\=4^\sigma\,\sqrt{\pi}\,{\bf
i}\,\left(\frac{\lambda}{|k|}\right)^{-\sigma-\frac12}\,\Gamma(\sigma+1)\,\Big(H^{(1)}_{-\sigma-\frac12}
(2\lambda|k|)-H^{(2)}_{-\sigma-\frac12}
(2\lambda|k|)\Big),\end{multline*} where
$H^{(1)}_{-\sigma-\frac12}$ and $H^{(2)}_{-\sigma-\frac12}$ are
Hankel functions of the first and second kind, respectively. To
finish the proof we use the identity $$H^{(1)}_{-\sigma-\frac12}
(2\lambda|k|)-H^{(2)}_{-\sigma-\frac12} (2\lambda|k|)=2{\bf
i}\big(\sec(\pi\sigma)\,J_{\sigma+\frac12}(2\lambda|k|)-\tanh(\pi\sigma)\,J_{-\sigma-\frac12}(2\lambda|k|)\big).$$

\end{proof}

\section{Integrals involving products of Coulomb and Legendre functions, converging to modified Bessel functions and related to expression of $f^\bullet _\lambda$
with respect to basis $B^\bullet_2$}

The results obtained in Theorems 3 and 4 be may  be characterised
as formulae, on the one hand, for exponential Fourier and, for the
second hand, for Mellin transform of Coulomb functions. In
addition, Theorem 3 is being formula for $K$-transform and Theorem
4 is being formula for sum of Hankel transforms \cite{be2} of
Coulomb functions. All these formulas have been derived from the
expression of the function belonging to 'hyperbolic' basis with
respect to 'parabolic' basis. Choosing the opposite direction, in
this section we derive one formula for
 generalized index Mehler--Fock transform \cite{y}.

\begin{theorem} For \eqref{restriction} and $\lambda\in\mathbb{R}_0$, \begin{multline*}
\int\limits_{\mathbb{R}}\left[\pi^{-1}\,\left|\Gamma\left(\frac34+{\bf
i}\rho\right)\right|\,\exp\left(\frac{\pi\rho}2\right)\,F_{-\frac14}(\rho,\lambda)+\lambda^{\frac14}\,
\frac{2\Big(A\,G_{-\frac34}(\rho,\lambda)-
B\,F_{-\frac34}(\rho,\lambda)\Big)}{\sqrt{\cosh(2\pi\rho)}}\right]\\
(\xi_1-\xi_3)^{-\frac18-{\bf i}\rho}\,(\xi_1+\xi_3)^{-\frac18+{\bf
i}\rho}\,{\rm B}\left(\frac14+{\bf i}\rho, \frac14-{\bf
i}\rho\right)\,P_{{\bf
i}\rho-\frac12}^{\frac14}\left(\frac{|\xi_2|}{\sqrt{\xi_1^2-\xi_3^2}}\right)\,{\rm
d}\rho\\=2^{\frac34}\,\left(\frac\lambda\pi\right)^{\frac12}\,(\xi_1^2-\xi_3^2)^{\frac18}\,
K_{\frac14}\left(\frac{\lambda\,\sqrt{\xi_1^2-\xi_2^2-\xi_3^2}}{|\xi_1+\xi_2|}\right).
\end{multline*} \end{theorem}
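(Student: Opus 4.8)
The plan is to apply the functional ${\sf F}_i(\,\cdot\,,F_\xi)$ to both sides of the expansion \eqref{chirac} of $f^\bullet_\lambda$ with respect to $B_2^\bullet$ and to evaluate the two members independently, i.e.\ to run the argument of Theorems 3 and 4 in the opposite direction. Here $F_\xi$ has homogeneity degree $\sigma=-\tfrac14$, while $f^\bullet_\lambda$ and the $f^\bullet_{\rho,\pm}$ have degree $-\sigma-1$, so the coincidence ${\sf F}_1={\sf F}_2$ on such pairs (\cite{jnsa}) lets us pick the convenient realization on each side: ${\sf F}_1$ on the left, where $f^\bullet_\lambda|_{\gamma_1}=\exp({\bf i}\lambda\alpha_1)$, and ${\sf F}_2$ on the right, where by \eqref{roosevelt} the $f^\bullet_{\rho,+}$-term only meets $\gamma_{2,+}$ and the $f^\bullet_{\rho,-}$-term only $\gamma_{2,-}$. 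This yields
$${\sf F}_1(f^\bullet_\lambda,F_\xi)=\int_{\mathbb{R}}\Big(c^\bullet_{\lambda,\rho,+}\,{\sf F}_2(f^\bullet_{\rho,+},F_\xi)+c^\bullet_{\lambda,\rho,-}\,{\sf F}_2(f^\bullet_{\rho,-},F_\xi)\Big)\,{\rm d}\rho ,$$
and the theorem will follow by computing each side and matching constants.

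For the left side I would restrict $F_\xi$ to $\gamma_1$ as in the proof of Theorem 3 and complete the square, getting
$${\sf F}_1(f^\bullet_\lambda,F_\xi)=\Big(\tfrac{\xi_1+\xi_2}{2}\Big)^\sigma\int_{\mathbb{R}}\exp({\bf i}\lambda\alpha_1)\Big[\Big(\alpha_1-\tfrac{\xi_3}{\xi_1+\xi_2}\Big)^2+\tfrac{\xi_1^2-\xi_2^2-\xi_3^2}{(\xi_1+\xi_2)^2}\Big]^\sigma{\rm d}\alpha_1 ,$$
where the additive constant inside the bracket is strictly positive by \eqref{restriction}. Formula \eqref{london} with $\nu=-\sigma$ then converts this into $K_{-\sigma-1/2}\big(\tfrac{\lambda\sqrt{\xi_1^2-\xi_2^2-\xi_3^2}}{|\xi_1+\xi_2|}\big)=K_{1/4}\big(\tfrac{\lambda\sqrt{\xi_1^2-\xi_2^2-\xi_3^2}}{|\xi_1+\xi_2|}\big)$ times elementary powers of $\lambda$, $\xi_1+\xi_2$ and $\xi_1^2-\xi_2^2-\xi_3^2$ and the factor $\exp\big(\tfrac{{\bf i}\xi_3\lambda}{\xi_1+\xi_2}\big)$; once the powers of $\xi_1^2-\xi_2^2-\xi_3^2$ are matched against the ones produced by the right side, this is the right-hand side of the theorem.

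For the right side, ${\sf F}_2(f^\bullet_{\rho,\pm},F_\xi)$ is precisely what was found in the proof of Theorem 3: the substitution $t=e^{\alpha_2}$, formula \eqref{paris} with $\mu=-\sigma+{\bf i}\rho$, $\nu=-\sigma$, and then the hypergeometric-to-Legendre identity \cite[Entry 7.3.1.(41)]{v3} give, for both signs alike (only $\xi_2^2$ enters), a multiple of ${\rm B}(-\sigma+{\bf i}\rho,-\sigma-{\bf i}\rho)\,P^{\sigma+1/2}_{{\bf i}\rho-1/2}\big(\tfrac{|\xi_2|}{\sqrt{\xi_1^2-\xi_3^2}}\big)$ with power prefactors in $\xi_1\pm\xi_3$; at $\sigma=-\tfrac14$ this is the common $\xi$-factor of the integrand in the statement, with $P^{1/4}_{{\bf i}\rho-1/2}$ and ${\rm B}(\tfrac14+{\bf i}\rho,\tfrac14-{\bf i}\rho)$. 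It remains to insert Theorem 1 for $c^\bullet_{\lambda,\rho,+}$ and Theorem 2 for $c^\bullet_{\lambda,\rho,-}$: after multiplying both sides by $\lambda^{3/4}$ the sum $c^\bullet_{\lambda,\rho,+}+c^\bullet_{\lambda,\rho,-}$ becomes exactly the bracket $\pi^{-1}|\Gamma(\tfrac34+{\bf i}\rho)|\exp(\tfrac{\pi\rho}{2})F_{-1/4}(\rho,\lambda)+\lambda^{1/4}\cdot2\big(A\,G_{-3/4}(\rho,\lambda)-B\,F_{-3/4}(\rho,\lambda)\big)/\sqrt{\cosh(2\pi\rho)}$ of the theorem (the simultaneous appearance of the indices $-\tfrac14$ and $-\tfrac34$ is harmless, as $F_{-3/4},G_{-3/4}$ span the same ${\rm Ker}\,\mathfrak{d}$ as $F_{-1/4},G_{-1/4}$). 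Equating the two evaluations and simplifying the remaining constants with the Legendre duplication formula $\Gamma(-\sigma)\Gamma(\tfrac12-\sigma)=2^{2\sigma+1}\sqrt{\pi}\,\Gamma(-2\sigma)$, the rule $\Gamma(\overline z)=\overline{\Gamma(z)}$, and the $\cosh$ identities of Theorem 2 then finishes the proof.

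The step I expect to be the main obstacle is the rigorous interchange of ${\sf F}_i$, an integral over $\gamma_i$, with the $\rho$-integration in \eqref{chirac}: this index-transform expansion converges only conditionally, so the $\rho$-integral in the statement should be read as a principal value (just as the $\lambda$-integrals in Theorems 3 and 4 are), or handled via a Gaussian/Abel regularization removed at the end; one must also check that the Fourier integral on $\gamma_1$ and the $\gamma_2$-integral genuinely converge throughout the region \eqref{restriction}. The remaining difficulty is purely bookkeeping: keeping the branches of the multivalued factors $(\xi_1\pm\xi_3)^{\mp{\bf i}\rho/2}$ and $({\bf i}\lambda)^{-\sigma-1}$ and of the Whittaker/parabolic-cylinder arguments $\mp2{\bf i}\lambda$ consistent across Theorems 1--3, and aligning the upper/lower sign choices in \eqref{london} and in the two branches $\gamma_{2,\pm}$, so that the two Coulomb-function pieces fuse into the single bracket with exactly the constant $2^{3/4}\pi^{-1/2}$ of the statement.
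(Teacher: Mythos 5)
Your proposal follows exactly the paper's own route: apply ${\sf F}_i(\cdot,F_\xi)$ to the expansion \eqref{chirac} with $i=1$ and $j=k=2$, evaluate ${\sf F}_1(f^\bullet_\lambda,F_\xi)$ by \eqref{london} and ${\sf F}_2(f^\bullet_{\rho,\pm},F_\xi)$ by \eqref{paris} (plus the hypergeometric-to-Legendre identity), and substitute Theorems 1 and 2 for the coefficients at $\sigma=-\tfrac14$. The identifications, the $\lambda^{3/4}$ normalization, and the constant matching are all consistent with the paper's (much terser) argument, so the proposal is correct and essentially identical in approach.
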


\begin{proof} From \eqref{chirac} we have $${\sf F}_i(f^\bullet_\lambda,F_\xi)=\int_\mathbb{R}[c^\bullet_{\lambda,\rho,+}
{\sf F}_j(f^\bullet_{\rho,+},F_\xi)+c^\bullet_{\lambda,\rho,-}{\sf
F}_k(f^\bullet_{\rho,-},F_\xi)]\,{\rm d}\rho,$$ where
$i,j,k\in\{1,2\}$. Choosing here $i=1$ and $j=k=2$, we calculate
${\sf F}_1(f^\bullet_\lambda,F_\xi)$ and ${\sf
F}_2(f^\bullet_{\rho,+},F_\xi)$ according to formulas
\eqref{london} and \eqref{paris}, respectively, and use Theorems 1
and 2.
\end{proof}

By using the formula (see, for example, \cite{v3})
$$P_\nu^\mu(z)=\sqrt{\frac2\pi}\,\frac{{\bf i}\,\exp({\bf
i}\nu\pi)}{(z^2-1)^{\frac14}}\,Q_{-\mu-\frac12}^{-\nu-\frac12}\left(\frac{z}{\sqrt{z^2-1}}\right),$$
which is valid for $\Re(z)\in\mathbb{R}_0$, it is possible to
obtain the result, which is similar to Theorem 5, for
\eqref{opposite}.

\section{The relationship with the Poisson transform in $\mathfrak{D}^\bullet$}

Let us note that the integrals ${\sf
F}_i(f^\bullet_\lambda,F_\xi)$ and ${\sf
F}_i(f^\bullet_{\rho,\pm},F_\xi)$, which do not depend on the
choice of integration contour $\gamma_i$, are the particular cases
of the so-called Poisson transform \cite[Section
10.3.1]{vk}$$\mathcal{P}[f](y)=\int_{\gamma_i}f(x)\,(x_1y_1-x_2y_2-x_3y_3)^\sigma\,{\rm
d}\gamma_i,$$ where $f\in\mathfrak{D}^\bullet$ and the point $y$
belongs to the hyperboloid $\Upsilon:\,y_1^2-y_2^2-y_3^2=1$. The
image of this integral transform consists of $\sigma$-homogeneous
functions which are defined on $\Upsilon$ and belong to the kernel
of the following '$(1,2)$-Laplace operator':
$$\square=\frac{\partial^2}{\partial y_1^2}-\frac{\partial^2}{\partial
y_2^2}-\frac{\partial^2}{\partial y_3^2}.$$ The Poisson transform
intertwines the representation $T^\bullet$ and the representation
defined by the shifts
$\mathcal{P}[f](y)\longmapsto\mathcal{P}[f](g^{-1}y)$. Thus,
computing ${\sf F}_i(f^\bullet_\lambda,F_\xi)$ and ${\sf
F}_i(f^\bullet_{\rho,\pm},F_\xi)$ for the case
\begin{equation}\label{march}\xi=y=(\cosh\alpha_3,\sinh\alpha_3\cos\beta_3,\sinh\alpha_3\sin\beta_3),\end{equation}
where $\alpha_3\in\mathbb{R}$ and $\beta_3\in(-\pi,\pi)$, which
satisfies the condition \eqref{restriction}, we obtain the values
of the Poisson transform with the kernel $F_y$ of the basis
functions $f^\bullet_\lambda$ and $f^\bullet_{\rho,\pm}$. For
example, we have
\begin{multline*}\mathcal{P}[f^\bullet_\lambda](y)=\sqrt{\frac{2\pi}{\cosh\alpha_3+\sinh\alpha_3\cos\beta_3}}\,
\frac{|\lambda|^{-\sigma-\frac12}}{\Gamma(-\sigma)}\\\cdot\exp\left(\frac{{\bf
i}\lambda\sinh\alpha_3\sin\beta_3}{\cosh\alpha_3+\sinh\alpha_3\cos\beta_3}\right)\,
K_{\sigma+\frac12}\left(\frac{|\lambda|}{\cosh\alpha_3+\sinh\alpha_3\cos\beta_3}\right).\end{multline*}
The function $F_y$ as a function $F_x(y)$ defined on $\Upsilon$ is
also being the kernel of the so-called Gelfand--Graev integral
transform. The applications of this transform to generalizations
of Funk--Hecke theorem had been considered in \cite{vk} and
\cite{s}.

We note also that choosing $\beta=0$ in \eqref{march}, we obtain
from Theorem 3 the following integral representation of Legendre
function: \begin{multline*}P_{{\bf
i}\rho-\frac12}^{\sigma+\frac12}(|\tanh\alpha_3|)=\frac{2\,\exp\left(-\frac{\alpha_3+\pi\rho}2\right)}
{\pi\,\Gamma(-2\sigma)\,{\rm B}(-\sigma-{\bf i}\rho,-\sigma+{\bf
i}\rho)\,|\Gamma(-\sigma+{\bf i}\rho)|}\\
\cdot\int_{\mathbb{R}_0}\frac{F_{-\sigma-1}(-\rho,\-\lambda)+(-1)^\sigma\,F_{-\sigma-1}(-\rho,\lambda)}{\sqrt{\lambda}}\,
K_{\sigma+1}\left(\frac\lambda{\exp\alpha_3}\right)\,{\rm
d}\lambda .\end{multline*} A more general representation for
$P_{{\bf i}\rho-\frac12}^{\sigma+\frac12}(|\tanh\alpha_3|)$ can be
obtained by using the following parametrization of $\Upsilon$:
$$y=(\cosh\alpha_3\cosh\beta_3,\sinh\alpha_3,\cosh\alpha_3\sinh\beta_3).$$

\bigskip

\renewcommand{\refname}{\textbf{References}}


\begin{thebibliography}{99}\medskip


\bibitem{c} A. R. Curtis, \emph{Coulomb Wave Functions}, Cambridge University Press,
Cambridge, 1964.


\bibitem{dyf} A. Dzieciol, S. Yngve, and P. O. Froman, Coulomb wave functions with complex values of the variable
and the parameters, \emph{J. Math. Phys.}, \textbf{40} (1999),
6145--6166.

\bibitem{ek}  A.  Erd\'elyi, M.  Kennedy,   and J. L. McGregory,
  \emph{Asymptotic forms of Coulomb wave functions}, Part  \textbf{I},
California Institute of Technology, Pasadena, 1955.

\bibitem{be2} A. Erd\'elyi (editor), \emph{Tables of Integral Transforms. Vol. 2}, McGraw-Hill
Book Company, New York, Toronto, london, 1954.

\bibitem{g} D. Gaspard, Connection formulas between Coulomb wave
functions, \emph{J. Math. Phys.}, \textbf{59} (2018), 112104.

\bibitem{gsh} I. M. Gelfand and G. E. Shilov, \emph{Generalized Functions},  Academic Press, 1968.

\bibitem{gr} I. S. Gradshteyn and I. M. Ryzhik,  \emph{Table of Integrals, Series, and Products}, , Academic Press, 2007.

\bibitem{h} D. Hartree, \emph{The calculation of atomic
structures}, Ney York, Wiley, 1957.

\bibitem{jj} H. Jeffreys and B. Jeffreys, \emph{Methods of
Mathematical Phisics}, Cambridge University Press, Cambridge,
1956.

\bibitem{nu} A. F. Nikiforov and V. B. Uvarov, \emph{Special
Functions of Mathematical Physics}, Birkhauser, Basel, Boston,
1988.

\bibitem{ob} F. Oberhettinger, \emph{Tabellen zur Fourier
Transformations}, Springer-Verlag, Berlin, 1957.

\bibitem{v1} A. P. Prudnikov, Yu. A. Brychkov and O. I. Marichev,  \emph{Integrals and Series},  Vol. \textbf{1}: \emph{Elementary Functions},
   OPA (Overseas Publishers Association), Amsterdam B. V. Published under the
  license of Gordon and Breach Science Publishers, 1986.

  \bibitem{v3} A. P. Prudnikov, Yu. A. Brychkov and O. I. Marichev,  \emph{Integrals and Series},  Vol. \textbf{3}: \emph{More Special Functions},
   OPA (Overseas Publishers Association), Amsterdam B. V. Published under the
  license of Gordon and Breach Science Publishers, 1986.

  \bibitem{jnsa} I. A. Shilin and J. Choi, Certain relations between Bessel and
Whittaker functions related to some diagonal and block-diagonal
$3\times3$-matrices, \emph{J. Nonlinear Sci. Appl.}, \textbf{10}
(2017), 560--574.

\bibitem{s} T. V. Shtepina, A generalization of the Funk-Hecke theorem to the case of hyperbolic
spaces, \emph{Izvestiya: Mathematics}, \textbf{68}(5) (2004),
1051-–1061.

\bibitem{vk}  N. J. Vilenkin and A. U. Klimyk,
\emph{Representations of Lie Groups and special Functions. Vol.
2}, Dordrecht, Kluwer Academic Publishers, 1993.

\bibitem{vsh}  N. J. Vilenkin and M. A. Sleinikova,  Integral
relations for the Whittakers functions and the representations of
the three-dimensional Lorentz group, \emph{ Math. USSR Sb}.
\textbf{10} (1970), 173--180.

\bibitem{y} S. Yakubovich, \emph{Index Transforms}, Singapore, New
Jersey, London and Hong Kong, World Scientific, 1996.





  \end{thebibliography}
\end{document}